\newtheorem{thm}[equation]{Theorem}
\numberwithin{equation}{section}
\newtheorem{cor}[equation]{Corollary}
\newtheorem{prop}[equation]{Proposition}
\newtheorem{fig}[equation]{Figure}
\begin{document}
\setcounter{MaxMatrixCols}{26}
\raggedbottom \voffset=-.7truein \hoffset=0truein \vsize=8truein
\hsize=6truein \textheight=8truein \textwidth=6truein
\baselineskip=18truept

\def\mapright#1{\ \smash{\mathop{\longrightarrow}\limits^{#1}}\ }
\def\mapleft#1{\smash{\mathop{\longleftarrow}\limits^{#1}}}
\def\mapup#1{\Big\uparrow\rlap{$\vcenter {\hbox {$#1$}}$}}
\def\mapdown#1{\Big\downarrow\rlap{$\vcenter {\hbox {$\ssize{#1}$}}$}}
\def\mapne#1{\nearrow\rlap{$\vcenter {\hbox {$#1$}}$}}
\def\mapse#1{\searrow\rlap{$\vcenter {\hbox {$\ssize{#1}$}}$}}
\def\mapr#1{\smash{\mathop{\rightarrow}\limits^{#1}}}
\def\ss{\smallskip}
\def\vp{v_1^{-1}\pi}
\def\at{{\widetilde\alpha}}
\def\sm{\wedge}
\def\la{\langle}
\def\ra{\rangle}
\def\on{\operatorname}
\def\ol#1{\overline{#1}{}}
\def\spin{\on{Spin}}
\def\lbar{\ell}
\def\qed{\quad\rule{8pt}{8pt}\bigskip}
\def\ssize{\scriptstyle}
\def\a{\alpha}
\def\bz{{\Bbb Z}}
\def\im{\on{im}}
\def\ct{\widetilde{C}}
\def\ext{\on{Ext}}
\def\sq{\on{Sq}}
\def\eps{\epsilon}
\def\ar#1{\stackrel {#1}{\rightarrow}}
\def\br{{\bold R}}
\def\bC{{\bold C}}
\def\bA{{\bold A}}
\def\bB{{\bold B}}
\def\bd{{\bold d}}
\def\bh{{\bold H}}
\def\bQ{{\bold Q}}
\def\bP{{\bold P}}
\def\bx{{\bold x}}
\def\bo{{\bold{bo}}}
\def\si{\sigma}
\def\Vbar{{\overline V}}
\def\dbar{{\overline d}}
\def\wbar{{\overline w}}
\def\Sum{\sum}
\def\tz{tikzpicture}
\def\tfrac{\textstyle\frac}
\def\tb{\textstyle\binom}
\def\Si{\Sigma}
\def\s{\sigma}
\def\st{\widetilde{\sigma}}
\def\w{\wedge}
\def\equ{\begin{equation}}
\def\b{\beta}
\def\G{\Gamma}
\def\g{\gamma}
\def\k{\kappa}
\def\psit{\widetilde{\Psi}}
\def\tht{\widetilde{\Theta}}
\def\psiu{{\underline{\Psi}}}
\def\thu{{\underline{\Theta}}}
\def\aee{A_{\text{ee}}}
\def\aeo{A_{\text{eo}}}
\def\aoo{A_{\text{oo}}}
\def\aoe{A_{\text{oe}}}
\def\vbar{{\overline v}}
\def\endeq{\end{equation}}
\def\sn{S^{2n+1}}
\def\zp{\bold Z_p}
\def\A{{\cal A}}
\def\P{{\mathcal P}}
\def\cQ{{\mathcal Q}}
\def\cj{{\cal J}}
\def\zt{{\bold Z}_2}
\def\bs{{\bold s}}
\def\by{{\bold y}}
\def\bx{{\bold x}}
\def\be{{\bold e}}
\def\Hom{\on{Hom}}
\def\ker{\on{ker}}
\def\coker{\on{coker}}
\def\da{\downarrow}
\def\colim{\operatornamewithlimits{colim}}
\def\zphat{\bz_2^\wedge}
\def\io{\iota}
\def\Om{\Omega}
\def\Prod{\prod}
\def\e{{\cal E}}
\def\exp{\on{exp}}
\def\abar{{\overline a}}
\def\xbar{{\overline x}}
\def\ybar{{\overline y}}
\def\zbar{{\overline z}}
\def\Rbar{{\overline R}}
\def\nbar{{\overline n}}
\def\cbar{{\overline c}}
\def\qbar{{\overline q}}
\def\bbar{{\overline b}}
\def\et{{\widetilde E}}
\def\ni{\noindent}
\def\coef{\on{coef}}
\def\den{\on{den}}
\def\lcm{\on{l.c.m.}}
\def\vi{v_1^{-1}}
\def\ot{\otimes}
\def\psibar{{\overline\psi}}
\def\mhat{{\hat m}}
\def\exc{\on{exc}}
\def\ms{\medskip}
\def\ehat{{\hat e}}
\def\etao{{\eta_{\text{od}}}}
\def\etae{{\eta_{\text{ev}}}}
\def\dirlim{\operatornamewithlimits{dirlim}}
\def\gt{\widetilde{L}}
\def\lt{\widetilde{\lambda}}
\def\st{\widetilde{\sigma}}
\def\ft{\widetilde{f}}
\def\sgd{\on{sgd}}
\def\lfl{\lfloor}
\def\rfl{\rfloor}
\def\ord{\on{ord}}
\def\gd{{\on{gd}}}
\def\rk{{{\on{rk}}_2}}
\def\nbar{{\overline{n}}}
\def\lg{{\on{lg}}}
\def\cB{\mathcal{B}}
\def\cS{\mathcal{S}}
\def\cP{\mathcal{P}}
\def\N{{\Bbb N}}
\def\Z{{\Bbb Z}}
\def\Q{{\Bbb Q}}
\def\R{{\Bbb R}}
\def\C{{\Bbb C}}
\def\l{\left}
\def\r{\right}
\def\mo{\on{mod}}
\def\xt{\times}
\def\notimm{\not\subseteq}
\def\Remark{\noindent{\it  Remark}}

\def\*#1{\mathbf{#1}}
\def\0{$\*0$}
\def\1{$\*1$}
\def\22{$(\*2,\*2)$}
\def\33{$(\*3,\*3)$}
\def\ss{\smallskip}
\def\ssum{\sum\limits}
\def\dsum{\displaystyle\sum}
\def\la{\langle}
\def\ra{\rangle}
\def\on{\operatorname}
\def\o{\on{o}}
\def\U{\on{U}}
\def\lg{\on{lg}}
\def\a{\alpha}
\def\bz{{\Bbb Z}}
\def\eps{\varepsilon}
\def\bc{{\bold C}}
\def\bN{{\bold N}}
\def\nut{\widetilde{\nu}}
\def\tfrac{\textstyle\frac}
\def\b{\beta}
\def\G{\Gamma}
\def\g{\gamma}
\def\zt{{\Bbb Z}_2}
\def\zth{{\bold Z}_2^\wedge}
\def\bs{{\bold s}}
\def\bx{{\bold x}}
\def\bof{{\bold f}}
\def\bq{{\bold Q}}
\def\be{{\bold e}}
\def\lline{\rule{.6in}{.6pt}}
\def\xb{{\overline x}}
\def\xbar{{\overline x}}
\def\ybar{{\overline y}}
\def\zbar{{\overline z}}
\def\ebar{{\overline \be}}
\def\nbar{{\overline n}}
\def\rbar{{\overline r}}
\def\Mbar{{\overline M}}
\def\et{{\widetilde e}}
\def\ni{\noindent}
\def\ms{\medskip}
\def\ehat{{\hat e}}
\def\what{{\widehat w}}
\def\Yhat{{\widehat Y}}
\def\nbar{{\overline{n}}}
\def\minp{\min\nolimits'}
\def\mul{\on{mul}}
\def\N{{\Bbb N}}
\def\Z{{\Bbb Z}}
\def\Q{{\Bbb Q}}
\def\R{{\Bbb R}}
\def\C{{\Bbb C}}
\def\notint{\cancel\cap}
\def\cS{\mathcal S}
\def\cR{\mathcal R}
\def\el{\ell}
\def\TC{\on{TC}}
\def\dstyle{\displaystyle}
\def\ds{\dstyle}
\def\Wbar{\wbar}
\def\zcl{\on{zcl}}
\def\Vb#1{{\overline{V_{#1}}}}

\def\Remark{\noindent{\it  Remark}}
\title
{The symmetrized topological complexity of the circle}
\author{Donald M. Davis}
\address{Department of Mathematics, Lehigh University\\Bethlehem, PA 18015, USA}
\email{dmd1@lehigh.edu}
\date{March 15, 2017}

\keywords{Topological complexity}
\thanks {2000 {\it Mathematics Subject Classification}: 55M30, 55N25, 57M20.}

\maketitle
\begin{abstract} We determine the symmetrized topological complexity of the circle, using primarily just general topology.
 \end{abstract}
\section{Introduction}
Let $PX$ denote the space of all paths in a topological space $X$, and define $p:PX\to X\times X$ by $p(\s)=(\s(0),\s(1))$. If $V\subset X\times X$, a section $s:V\to PX$ is called a motion planning rule on $V$. The reduced topological complexity of $X$, $\TC(X)$, is 1 less than the minimal number of open sets $V$ covering $X\times X$ which admit motion planning rules. The notion of topological complexity was introduced by Farber in \cite{F} in unreduced form, but most recent papers have preferred the reduced notation. Topological complexity can be applied to robotics when $X$ is the space of configurations of a robot.

A set $V\subset X\times X$ is {\it symmetric} if $(x,y)\in V$ iff $(y,x)\in V$. A symmetric motion planning rule on such a set $V$ is one which satisfies $s(x_1,x_0)=\overline{s(x_0,x_1)}$. Here $\overline{\sigma}(t)=\sigma(1-t)$.

In \cite{BGRT}, (reduced) symmetrized topological complexity $\TC^\Sigma(X)$ of  $X$ was defined to be 1 less than the minimal number of symmetric open sets covering $X\times X$ which admit symmetric motion planning rules. We will prove the following new result.
\begin{thm}\label{main} $\TC^\Sigma(S^1)=2$.\end{thm}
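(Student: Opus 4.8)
The plan is to recast the invariant as a single lifting problem and then to prove the two inequalities separately.

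\smallskip
\noindent\emph{Reduction.} Regard $S^1\subset\mathbf{C}$ and set $d(x,y)=y/x\in S^1$. A path in $S^1$ from $x$ to $y$ is determined up to homotopy rel endpoints by its net rotation $r\in\mathbf{R}$ (which satisfies $e^{2\pi ir}=d(x,y)$); the space of such paths with $r$ fixed is contractible; and $t\mapsto x\,e^{2\pi irt}$ is such a path, depending continuously on $(x,y,r)$. Since $\sigma\mapsto\overline\sigma$ negates net rotation, a symmetric motion planning rule on a symmetric open $V\subset S^1\times S^1$ is exactly the data of a continuous \emph{odd lift} of $d|_V$: a continuous $\tilde d\colon V\to\mathbf{R}$ with $e^{2\pi i\tilde d}=d$ and $\tilde d(y,x)=-\tilde d(x,y)$. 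Hence $\TC^\Sigma(S^1)+1$ equals the least number of symmetric open sets covering $S^1\times S^1$, each admitting an odd lift of $d$.

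\smallskip
\noindent\emph{Upper bound $\TC^\Sigma(S^1)\le2$.} I would use three sets. First $V_0=\{(x,y):x\ne-y\}$ with $\tilde d_0=\tfrac1{2\pi}\operatorname{Arg}(y/x)\in(-\tfrac12,\tfrac12)$: the principal branch is odd away from $-1$, so $\tilde d_0$ is an odd lift, and $V_0$ covers everything off the antidiagonal $A=\{(x,-x)\}$. The remaining input is: \emph{if $d|_V$ lifts and every connected component of a symmetric open $V$ that is fixed setwise by the switch $(x,y)\mapsto(y,x)$ meets the diagonal $\Delta$, then $V$ admits an odd lift} — one transports a lift over a switch-pair of components by the odd rule, and over a component meeting $\Delta$ one can correct a lift to an odd one since there the lift takes integer values (and vanishes on $\Delta$). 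Now fix $x_1,x_2\in S^1$ with $x_1\ne\pm x_2$ and put $V_i=\{(x,y):x\ne y,\ x\ne x_i,\ y\ne x_i\}$ for $i=1,2$. Each $V_i$ is symmetric, disjoint from $\Delta$, and is a disjoint union of two ``triangles'' interchanged by the switch (cut $S^1\times S^1$ along the meridians $\{x=x_i\}$, $\{y=x_i\}$ and along $\Delta$); so $V_i$ has no switch-fixed component, and $d|_{V_i}$ lifts because $V_i$ misses $\Delta$. Thus each $V_i$ carries an odd lift, and $V_1\cup V_2\supseteq A$ since $V_i\cap A=A\setminus\{(x_i,-x_i),(-x_i,x_i)\}$ and the excluded pairs are disjoint. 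Hence $V_0,V_1,V_2$ cover, giving $\TC^\Sigma(S^1)\le2$.

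\smallskip
\noindent\emph{Lower bound $\TC^\Sigma(S^1)\ge2$.} The basic obstruction is: \emph{no symmetric open $V$ with $A\subseteq V$ admits an odd lift of $d$}. On $A$ we have $d\equiv-1$, so a lift is locally constant with values in $\tfrac12+\mathbf{Z}$; as $A$ is connected this value is a single $c\in\tfrac12+\mathbf{Z}$; but comparing the two ends of a loop going once around $A$, the switch (which is free on $A$) together with oddness forces $c=-c$, which is impossible. This already gives $\TC^\Sigma(S^1)\ge1$. To rule out two sets, suppose $S^1\times S^1=V_0\cup V_1$ with odd lifts $\tilde d_0,\tilde d_1$. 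By the obstruction neither $V_i$ contains $A$, so $A\cap V_0$ and $A\cap V_1$ are proper symmetric open subsets covering the circle $A$; on each component arc $J$ the relevant lift is a constant $c_J\in\tfrac12+\mathbf{Z}$, with $c_{\sigma J}=-c_J$. Meanwhile $\Delta\subseteq V_0\cup V_1$, any component of $V_i$ meeting $\Delta$ is switch-fixed, and on it the lift is forced to take integer values (vanishing on $\Delta$). One then derives a contradiction by following the integer-valued, locally constant, odd function $\tilde d_0-\tilde d_1$ on $V_0\cap V_1$ — which vanishes along the components reaching $\Delta$ — around a cycle of overlapping pieces of the two covers, using that two proper open arcs covering $A$ must overlap; this forces some $\tilde d_i$ to have values of both parities at one point. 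I expect the organization of this last step — confirming that the parity clash cannot be dodged by any arrangement of $V_0,V_1$ — to be the main obstacle; equivalently, it is the claim that the switch-anti-invariant class of $d$ in $H^1(S^1\times S^1;\mathbf{Z})$ is not representable by an equivariant \v{C}ech cocycle for any two-set symmetric open cover.
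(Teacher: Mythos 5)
Your reduction to odd lifts $\tilde d\colon V\to\R$ of $d(x,y)=y/x$ is correct and is essentially the paper's Proposition~\ref{dprop} reformulated on the torus rather than on $I^2$ (the paper's integer-valued $d$ on $\rho^{-1}(V)$ is just $\tilde d$ pulled back and with the linear part $t'-t$ subtracted). Your upper bound is also correct, and is a genuinely different construction from the paper's explicit geodesic-based rules: the antidiagonal $A$ is covered by two sets $V_1,V_2$ that are each a disjoint union of two simply-connected halves swapped by the involution, so an odd lift can be chosen on one half and transported oddly to the other, and everything off $A$ is covered by the principal argument.

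The lower bound has a genuine gap, which you acknowledge. The observation that a single symmetric open set containing $A$ cannot carry an odd lift (a lift is a constant $c\in\frac12+\Z$ on $A$, and oddness forces $c=-c$) is correct, but it only gives $\TC^\Sigma(S^1)\ge1$, which already follows from $\TC(S^1)=1$. The step to $\ge2$ is only sketched and does not close as written: ``forces some $\tilde d_i$ to have values of both parities at one point'' is not a coherent conclusion, since $\tilde d_i$ is single-valued; and more substantively, tracking the odd integer-valued difference $\tilde d_0-\tilde d_1$ only along $A$ and $\Delta$ says nothing about how the components of $V_0$ and $V_1$ may be arranged away from those two circles, and it is exactly that flexibility which makes the theorem hard. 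The paper's entire Section~3 is devoted to controlling it: it invokes a non-trivial planar topology theorem (Proposition~\ref{conn}, due to Bayse, that the boundary of the unbounded complementary component of a bounded connected open subset of the plane is connected) to show that the relevant component boundaries are connected arcs joining two edges of $I^2$, and then carries out a careful case analysis of the resulting bands, including the ways holes in those bands can touch $\partial I^2$ (Figures~\ref{six} and \ref{seven}). Your concluding \v{C}ech-cocycle rephrasing is a plausible guess at the right framework but is stated as an assertion, not proved; without some substitute for this planar-topology control, the proposal does not establish the lower bound.
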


An earlier variant, called symmetric topological complexity, $\TC^S(X)$, was introduced in \cite{FG}. Employing here the reduced TC terminology, $\TC^S(X)$ equals the minimal number of symmetric open sets covering $X\times X-\Delta$ admitting symmetric motion planning rules. Here $\Delta=\{(x,x)\in X\times X\}$ is the diagonal. This notion assumes that one motion planning rule chooses the constant path from $x$ to $x$, possibly extended over a small neighborhood of $\Delta$, and then considers separately symmetric paths between distinct points. The reduced version employed here has the $-1$ in the reduction which cancels the $+1$ from the section over the diagonal. As noted in \cite[Prop 4.2]{BGRT}, it is immediate that for all $X$
$$\TC^S(X)-1\le \TC^{\Si}(X)\le\TC^S(X).$$

The advantage of the $\TC^S(-)$ concept is that, with $P'X$ denoting the space of paths between distinct points of $X$, there is a
$\zt$-equivariant fibration $P'X\to X\times X-\Delta$ of free $\zt$-spaces. This leads to nice cohomological bounds for $\TC^S(-)$. In an email to the author, Michael Farber confirmed that he felt that the $\TC^{\Sigma}$ definition was ``more natural'' than $\TC^S$. One nice feature of $\TC^\Si$ is that it is a homotopy invariant (\cite[Prop 4.7]{BGRT}), whereas $\TC^S$ is not. In the paper \cite{G}, written at the same time as ours, Mark Grant
discusses more fully the relationships between $\TC^\Sigma$ and $\TC^S$. In that paper he develops cohomological bounds for $\TC^\Sigma$.

In \cite{FG}, it was shown that $\TC^S(S^n)=2$ for all $n\ge1$. Since cohomology shows that when $n$ is even, three (not necessarily symmetric) motion planning rules are required for $S^n$, we obtain that $\TC^\Si(S^n)=2$ when $n$ is even. In \cite[Expl 4.5]{BGRT} and in \cite[Expl 17.5]{R}, it was noted that for odd $n$, it was not known whether $\TC^\Si(S^n)=1$ or 2, and the case $n=1$ was given special attention as an ``Open Problem'' in \cite[17.6]{R}. Our contribution here is to resolve this open problem.
In \cite{G}, Grant has proved $\TC^\Si(S^n)=2$ for all $n$, including $n=1$, which required special methods.
We thank him for interesting and helpful discussion about his approach and ours.

\section{Our approach and an example}\label{sec2}
Our approach is to associate to a motion planning rule on an open subset of $S^1\times S^1$ a locally constant function $d$ on an open subset of $I\times I$ with certain properties, and then show (in the next section) that the domains of two such functions cannot cover $I\times I$.

Let $\rho:I\times I\to S^1\times S^1$ be the usual quotient map defined by $\rho(t,t')=(e^{2\pi it},e^{2\pi it'})$, and $e:\R\to S^1$ the usual covering map defined by $e(t)=e^{2\pi it}$.

\begin{prop}\label{dprop}
If $V\subset S^1\times S^1$ is a symmetric open set, and $s:V\to PS^1$ is a symmetric motion planning rule, there is a continuous function
$d:\rho^{-1}(V)\to\Z$ satisfying, for all points in its domain,
\begin{eqnarray}\label{C1}&&d(t,1)-d(t,0)=-1,\\
\label{C2}&&d(1,t)-d(0,t)=1,\\ \label{symm}\text{and}&&d(t',t)=-d(t,t').\end{eqnarray}
\end{prop}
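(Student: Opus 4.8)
The plan is to obtain $d$ from lifts through the covering map $e:\R\to S^1$. For $(t,t')\in\rho^{-1}(V)$ the motion-planning path $\sigma_{t,t'}:=s(\rho(t,t'))\in PS^1$ runs from $e(t)$ to $e(t')$, so it has a unique lift $\widetilde\sigma_{t,t'}:I\to\R$ with $\widetilde\sigma_{t,t'}(0)=t$, and since $e(\widetilde\sigma_{t,t'}(1))=e(t')$ the quantity $\widetilde\sigma_{t,t'}(1)-t'$ is an integer; I would define $d(t,t'):=\widetilde\sigma_{t,t'}(1)-t'$. Note that $\rho^{-1}(V)$ is open since $\rho$ is continuous, and that $(t,t')\in\rho^{-1}(V)\Leftrightarrow(t',t)\in\rho^{-1}(V)$ because $V$ is symmetric, so \eqref{symm} will genuinely concern the domain of $d$.

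The first thing to check is continuity of $d$. The map $\rho^{-1}(V)\to\{(x,\tau)\in\R\times PS^1:e(x)=\tau(0)\}$, $(t,t')\mapsto(t,\sigma_{t,t'})$, is continuous (using that the section $s$ and the map $\rho$ are continuous, with $PS^1$ carrying the compact-open topology); composing it with the continuous path-lifting map sending $(x,\tau)$ to the lift of $\tau$ starting at $x$, and then evaluating at $1$, exhibits $(t,t')\mapsto\widetilde\sigma_{t,t'}(1)$ as continuous. Hence $d$ is continuous, and being $\Z$-valued it is locally constant.

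For \eqref{C1} and \eqref{C2} I would first record the elementary periodicity of $d$: if $m,n\in\Z$ then $\rho(t+m,t'+n)=\rho(t,t')$, hence $\sigma_{t+m,t'+n}=\sigma_{t,t'}$ and, by uniqueness of lifts, $\widetilde\sigma_{t+m,t'+n}=\widetilde\sigma_{t,t'}+m$, so that $d(t+m,t'+n)=d(t,t')+m-n$; taking $(m,n)=(0,1)$ gives \eqref{C1} and $(m,n)=(1,0)$ gives \eqref{C2}. For \eqref{symm} I would invoke the symmetry of $s$, which says $\sigma_{t',t}=\overline{\sigma_{t,t'}}$, i.e.\ $\sigma_{t',t}(u)=\sigma_{t,t'}(1-u)$; then $u\mapsto\widetilde\sigma_{t,t'}(1-u)$ is a lift of $\sigma_{t',t}$ starting at $\widetilde\sigma_{t,t'}(1)=t'+d(t,t')$, so subtracting the constant $d(t,t')$ gives the lift of $\sigma_{t',t}$ starting at $t'$, namely $\widetilde\sigma_{t',t}(u)=\widetilde\sigma_{t,t'}(1-u)-d(t,t')$. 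Evaluating at $u=1$ yields $\widetilde\sigma_{t',t}(1)=\widetilde\sigma_{t,t'}(0)-d(t,t')=t-d(t,t')$, i.e.\ $d(t',t)=-d(t,t')$.

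The only step I expect to need real care is the continuity of $d$, which rests on continuity of path lifting for the covering $e$ (standard, but worth stating precisely, the more so given the paper's aim of using ``primarily just general topology''); once that is in hand, \eqref{C1}--\eqref{symm} are just bookkeeping with lifts, the one subtlety being to use the symmetry of $V$ so that the left-hand side of \eqref{symm} makes sense on the domain of $d$.
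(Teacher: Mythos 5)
Your proposal is correct and follows essentially the same route as the paper: define $d$ by lifting the motion-planning path through the covering $e:\R\to S^1$ and measuring the integer discrepancy at the endpoint, then read off (\ref{C1})--(\ref{symm}) from elementary bookkeeping with lifts. The only cosmetic difference is that you fix the normalization $\widetilde\sigma_{t,t'}(0)=t$ and set $d(t,t')=\widetilde\sigma_{t,t'}(1)-t'$, whereas the paper allows an arbitrary lift $\st$ and sets $d(t,t')=\st(1)-\st(0)-(t'-t)$; these definitions coincide, and your versions of the continuity argument (continuity of path lifting) and of (\ref{symm}) (reversing the lift and reanchoring) match the paper's reasoning in substance.
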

\begin{proof} Suppose $\rho(t,t')\in V$ with $\s=s(\rho(t,t'))\in PS^1$. Let $\st:I\to\R$ satisfy $e\circ\st=\s$. Note that $\st(1)-\st(0)$ is independent of the choice of $\st$. Let
$$d(t,t')=\st(1)-\st(0)-(t'-t)\in\R.$$
Then
\begin{eqnarray*}e(d(t,t'))&=&\s(1)\s(0)^{-1}e^{2\pi i(t-t')}\\
&=&e^{2\pi it'}e^{-2\pi it}e^{2\pi i(t-t')}=1.\end{eqnarray*}
Therefore $d(t,t')\in \Z$.

To see continuity of $d$, first note that $\s$ varies continuously with $(t,t')$. Thus $\st(0)$ can be chosen to vary continuously with $(t,t')$, and hence so does $\st(1)$, by the Homotopy Lifting Theorem.

 Since $\rho(t,1)=\rho(t,0)$, the $\s$'s associated to these points are the same, and hence so are the two values of $\st(1)-\st(0)$. Now property (\ref{C1}) follows immediately from the change in $t'$, and (\ref{C2}) follows similarly. Property (\ref{symm}) is clear, since both $t'-t$ and $\st(1)-\st(0)$ are negated when $t$ and $t'$ are interchanged.\end{proof}

Since $d$ is a continuous integer-valued function, it is constant on connected sets, a fact which we will use frequently. Note that, by (\ref{C1}) and (\ref{C2}), $(t,1)$ is in the domain of $d$ iff $(t,0)$ is, and similarly for $(1,t)$ and $(0,t)$.

Next we provide an example of the functions $d$ associated to three motion planning rules whose domains cover the circle.
The rules for moving from $z$ to $z'$ are as follows.
\begin{itemize}
\item If $z$ and $z'$ are not antipodal, follow the geodesic.
\item If $z$ and $z'$ are not at the same horizontal level, let $w=\frac{z-z'}{|z-z'|}$ and $w'=-w$, and follow the geodesic from $z$ to $w$, then the path from $w$ to $w'$ which passes through $1$, then the geodesic from $w'$ to $z'$.
\item If $z$ and $z'$ are not at the same vertical level, let $w=\frac{z-z'}{|z-z'|}$ and $w'=-w$, and follow the geodesic from $z$ to $w$, then the path from $w$ to $w'$ which passes through $i=e^{i\pi/2}$, then the geodesic from $w'$ to $z'$.
\end{itemize}

The functions $d$ for these are as pictured in Figure \ref{fig1}:

\smallskip

\begin{fig}\label{fig1}

{\bf Domains of three motion planning rules.}

\begin{center}

\begin{\tz}[scale=2.5]
\draw (0,0) --(1,0) --(1,1) -- (0,1) -- (0,0);
\node at (.5,.48) {$0$}; \node at (.88,.18) {$1$}; \node at (.12,.85) {$-1$};
\draw (0,.5) -- (.5,1); \draw (.5,0) --(1,.5);
\draw (3,1) --(2,0) --(3,0) --(3,1) -- (2,1) -- (2,0);
\node at (2.7,.3) {$1$}; \node at (2.25,.7) {$-1$};
\node at (2.28,.1) {$0$}; \node at (2.10,.25) {$0$};
\node at (2.75,.88) {$0$}; \node at (2.9,.75) {$0$};
\draw (2.5,0) -- (2,.5); \draw (3,.5) --(2.5,1);
\draw (5,1) --(4,0) --(5,0) --(5,1) -- (4,1) -- (4,0);
\draw (5,0) --(4,1);
\node at (4.5,.2) {$0$}; \node at (4.5,.75) {$-1$};
\node at (4.25,.45) {$0$}; \node at (4.7,.45) {$1$};
\end{\tz}

\end{center}

\end{fig}

\noindent Points in $\partial I^2$ are in the domains except for $(0,0)$, $(1,0)$, $(0,1)$, and $(1,1)$ in the second and third, and $(0,\frac12)$, $(1,\frac12)$, $(\frac12,0)$, and $(\frac12,1)$ in the first and second.

For example, the region labeled ``1'' in the second square corresponds to points $(z,z')=(e^{2\pi it},e^{2\pi it'})$ with $t>t'$ and $\frac12<t+t'<\frac32$. One such point has $(t,t')=(\frac12,\frac18)$. For the second motion planning rule above, $w=e^{i9\pi/8}$, $w'=e^{i\pi/8}$, and $\si$ is a counterclockwise rotation from $z$ to $z'$, passing through $w$ and $w'$. Thus $\st(1)-\st(0)=\frac58$, and
$$d(\tfrac12,\tfrac18)=\tfrac58-(\tfrac18-\tfrac12)=1.$$
This is illustrated in Figure \ref{fig3}.

\begin{fig}\label{fig3}

{\bf Circle illustrating a motion planning rule.}

\begin{center}

\begin{\tz}[scale=.55]
\draw (0,0) circle [radius=2];
\draw (1.43,1.424) --(-2,0) --(2,0);
\draw (-1.848,-.7654) --(1.848,.7654);
\node [left]  at (-2,0) {$z$};
\node [above right] at (1.43,1.424) {$z'$};
\node at (-2,0) {$\ssize\bullet$};
\node at (1.414,1.414) {$\ssize\bullet$};
\node at (1.848,.7654) {$\ssize\bullet$};
\node at (-1.848,-.7654) {$\ssize\bullet$};
\node [below left] at (-1.848,-.7654) {$w$};
\node [above right] at (1.848,.7654) {$w'$};
\node at (2.2,0) {$1$};
\end{\tz}
\end{center}
\end{fig}

Note that for each of the domains $V$ in Figure \ref{fig1}, if a second open set $V'$ covers the interior boundary lines, then its function $d'$ in the first two figures would have to satisfy $d'(0,\frac12)=-\frac12$ in order to satisfy (\ref{C2}) and (\ref{symm}), while in the third figure, $d'$ must be 0 on a neighborhood of the two diagonal lines, and this will contradict (\ref{C1}) and (\ref{C2}).

\section{Proof of Theorem \ref{main}}
Our proof uses the following result of general topology. Throughout the paper, $\partial K$ is the boundary in the sense of general topology, sometimes called frontier.
\begin{prop} \label{conn} If $W$ is a connected bounded open set of the plane, and $K$ is the unbounded connected component of $\R^2-W$, then its boundary, $\partial K$, is connected.\end{prop}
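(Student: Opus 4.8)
The plan is to deduce the statement from the unicoherence of the $2$-sphere. Write $U=\R^2\setminus K$, so that $\partial K=\partial U$ (the frontier of a set equals the frontier of its complement), and it is enough to analyse $\partial U$. First I would record some easy preliminaries: $K$ is closed (every connected component of any space is closed); and if $B$ is an open ball with $\overline W\subseteq B$, then $\R^2\setminus B$ is connected, unbounded, and disjoint from $W$, hence contained in the unbounded component $K$, so that $U\subseteq B$ is bounded. Also $W\subseteq U$ since $W\cap K=\emptyset$.

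The first substantive step is the lemma that $U$ is connected. Since $W$ is connected it lies in a single component $U_0$ of $U$; suppose for contradiction that there is another component $V\neq U_0$. Because $\R^2$ is locally connected, $V$ is open, and it is bounded (contained in $B$) and nonempty, so $\partial V\neq\emptyset$. I claim $\partial V\subseteq\partial U$: a frontier point of $V$ cannot lie in $U$, since the components of the open set $U$ are open, so a point of $U$ in $\overline V$ would force its component to meet $V$ and hence equal $V$. Thus $\partial V\subseteq\partial U=\partial K\subseteq K$. Consequently $\overline V$ is connected, meets $K$ (along the nonempty set $\partial V$), and is disjoint from $W$ (as $V\cap W=\emptyset$ and $\partial V\subseteq K$ is disjoint from $W$); so $\overline V$ lies in one component of $\R^2\setminus W$, necessarily $K$. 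But then $V\subseteq\overline V\subseteq K$, contradicting $V\subseteq\R^2\setminus K$. Hence $U$ is connected, and therefore so is $\overline U$.

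Next I would pass to $S^2=\R^2\cup\{\infty\}$. Let $K'=K\cup\{\infty\}$, which is the closure of $K$ in $S^2$ (since $K$ is closed in $\R^2$ and unbounded), hence closed and connected, with $S^2\setminus K'=U$. As $U$ is bounded, its closure in $S^2$ is just $\overline U$, and one checks directly that $\overline U\cup K'=S^2$ while $\overline U\cap K'=\partial U=\partial K$ (using $\partial U=\partial K\subseteq K$ and $\infty\notin\overline U$). Thus $\partial K$ is exhibited as the intersection of two closed connected subsets of $S^2$ whose union is $S^2$. Invoking the unicoherence of $S^2$---if $S^2=A\cup B$ with $A$ and $B$ closed and connected then $A\cap B$ is connected---we conclude that $\partial K$ is connected.

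The one genuinely non-elementary ingredient, and the step I expect to be the main obstacle, is the unicoherence of $S^2$. I would either cite it as classical (it appears in standard plane-topology references, and is equivalent to $\check H^1(S^2;\Z)=0$) or prove it by the classical Phragm\'en--Brouwer/Janiszewski methods. A secondary point requiring care is the connectedness of $U$: the natural ``work componentwise near a boundary point'' argument fails for $\R^2\setminus W$ because its components need not be open, and the argument above avoids this by using only the (open) components of $U$ together with connectedness of $\overline V$.
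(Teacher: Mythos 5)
Your proposal is correct, but it takes a genuinely different route from the paper. The paper's proof is a one-line citation: it invokes a 1935 theorem of Bayse stating that a bounded connected open set in the plane has connected complement iff it has connected boundary, applies this to $\R^2\setminus K$, and observes that $\partial(\R^2\setminus K)=\partial K$. Your argument instead works on $S^2$ and deduces connectedness of $\partial K$ from the unicoherence of the $2$-sphere, after first showing that $\overline U$ and $K\cup\{\infty\}$ are closed connected sets covering $S^2$ with intersection exactly $\partial K$. The two classical inputs (Bayse's theorem and unicoherence of $S^2$) are of comparable depth and are in fact closely related, so neither route is strictly more elementary; but your approach has a real advantage in completeness. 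The paper applies Bayse's iff to $U=\R^2\setminus K$, which silently requires $U$ to be \emph{connected}; this is true but not obvious, and the paper does not verify it. You prove it explicitly, via the clean observation that any alleged second component $V$ of $U$ would have $\partial V\subseteq\partial U=\partial K\subseteq K$, forcing the connected set $\overline V$ into $K$ and yielding a contradiction. So your proof is more self-contained and patches a small gap, at the cost of trading one citation (Bayse) for another (unicoherence of $S^2$), which you correctly flag as the one non-elementary ingredient.
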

\begin{proof}  \cite[Cor 1,p.~352]{Ba} states that a bounded connected open set in the plane has connected complement iff it has connected boundary, and calls this result ``well known.''
We apply this to $\R^2-K$, and note that $\partial(\R^2-K)=\partial K$.\end{proof}

A recent proof of this proposition appears in \cite{CKL}.
We will use the following corollary several times. It deals with a subspace $U$ of the unit square which is open in the subspace topology. By $\partial U$, we mean its boundary in $\R^2$.
\begin{cor}\label{cor} Let $U$ be a connected open subset of $I^2$ which intersects $\partial I^2$, and let $P$ and $Q$ be distinct points of the boundary in $\partial I^2$ of $U\cap\partial I^2$. Let $B$ and $B'$ be the two components of $\partial I^2-\{P,Q\}$. Suppose $B\cap U=\emptyset$. Then there is a connected subset of $\partial U-(B'\cap\partial U)$ which contains $P$ and $Q$.\end{cor}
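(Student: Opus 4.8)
The plan is to reduce to a genuinely planar open set and then quote Proposition~\ref{conn}. Since $U$ is open in $I^2$ but not in $\R^2$, I would first replace it by its $\R^2$-interior $U^\circ=U\cap\on{int}(I^2)$. Two facts need checking: that $U^\circ$ is connected, and that $\partial U^\circ=\partial U$. For the first, each point of $U\cap\partial I^2$ has a relatively open neighbourhood in $U$ which is a half- (or quarter-) disc, and the interior of such a disc is connected, hence lies in a single component of $U^\circ$; if $U^\circ$ had more than one component, this assignment would exhibit $U$ as a disjoint union of two nonempty relatively open sets, contrary to hypothesis. So $U^\circ$ is connected, and since $\overline{U^\circ}=\overline U$ it has the same frontier as $U$. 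I would also record the elementary consequences of the hypotheses: $U\cap\partial I^2$ is open in $\partial I^2$ with $P,Q$ in its frontier, so $P,Q\notin U$, $U\cap\partial I^2\subseteq B'$, and $P,Q\in\overline{U^\circ}\setminus U^\circ=\partial U$; also $\overline B$ and $\overline{B'}$ are disjoint from $U^\circ$.

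The model case is $W=U^\circ$. Taking $K$ to be the unbounded component of $\R^2\setminus U^\circ$, Proposition~\ref{conn} gives that $\partial K$ is connected, and one checks $\partial K=K\cap\partial U\subseteq\partial U$; moreover $\partial I^2$ is connected, disjoint from $U^\circ$, and joined to the exterior, so $\partial I^2\subseteq K$ and therefore $P,Q\in K\cap\partial U=\partial K$. This already produces a connected subset of $\partial U$ through $P$ and $Q$; the only defect is that it may meet $B'$, in $\overline U\cap B'$. To remove that defect I would enlarge the picture near $\overline{B'}$: glue to $I^2$ a thin flap along $\overline{B'}$, pinched to zero width at $P$ and $Q$, producing a larger closed disc $\widetilde{I^2}$ in which $B'$ is interior but $P,Q$ still lie on $\partial\widetilde{I^2}=\overline B\cup\beta$, where $\beta$ is the outer edge of the flap. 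With $W:=U\cup(\text{interior of the flap})$ — a connected bounded open set — the same run of the argument gives $\partial K$ connected, $P,Q\in\partial K$, and now $\partial K\subseteq(\partial U\setminus B')\cup\beta$, because the flap screens $B'$ away from $K$.

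The hard part is the final extraction: one must verify that the only extra material $\partial K$ picks up is (an arc of) $\beta$, and that after discarding the open arc $\beta\setminus\{P,Q\}$ what remains of $\partial K$ is still connected and still contains $P$ and $Q$. Since $\partial K$ need not be a simple closed curve, this is genuine point-set work, as is the bookkeeping in the cases where $\overline U$ fails to meet all of $B'$ or does meet $B$. I expect this to be where the essential difficulty of the corollary lies, and it is presumably why Proposition~\ref{conn} is isolated beforehand. (An alternative to the flap is to argue directly with $\partial K$ for $W=U^\circ$: this continuum contains $P$ and $Q$ and meets $B'$ only inside the open arc of $\partial I^2$ between $P$ and $Q$, and one tries to show it must contain a connected subset joining $P$ to $Q$ and missing that arc; but I suspect the flap construction makes the required separation cleaner.)
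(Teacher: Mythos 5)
Your ``model case'' $W=U^\circ$ is exactly the paper's entire argument: the paper sets $W=U-(U\cap\partial I^2)$, takes $K$ to be the unbounded component of $\R^2-W$, invokes Proposition~\ref{conn} to conclude $\partial K$ is connected, observes that $\{P,Q\}\cup(U\cap\partial I^2)\subset\partial K$, and then asserts in a single closing sentence that the connected component of $\partial K-(U\cap\partial I^2)$ containing $P$ also contains $Q$ and lies in $\partial U-(B'\cap\partial U)$. In other words, the route you relegate to your final parenthetical (``argue directly with $\partial K$ for $W=U^\circ$'') is the paper's actual proof, and the paper does not supply an argument for the extraction step beyond that sentence. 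Your primary route --- gluing a flap along $\overline{B'}$ pinched at $P$ and $Q$, enlarging $W$ by the flap interior, re-running Proposition~\ref{conn}, and trimming the artificial outer edge $\beta$ --- is a genuinely different device. It buys you something real: the set that must ultimately be excised from the continuum $\partial K$ becomes a single open arc $\beta\setminus\{P,Q\}$ with prescribed endpoints, whereas the paper removes $U\cap\partial I^2$, which need not be connected, and moreover needs the stronger (and also unargued) conclusion that the resulting component avoids \emph{all} of $B'\cap\partial U$, not just $U\cap\partial I^2$. What neither you nor the paper establishes is the core point-set claim that excising such an arc from the continuum $\partial K$ leaves $P$ and $Q$ in a common component; you honestly flag this as ``the hard part,'' while the paper states it without proof. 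So the proposal is a distinct approach at essentially the same level of completeness as the published argument, with a correct diagnosis of where the remaining difficulty sits.
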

\begin{proof} Apply Proposition \ref{conn} to $W=U-(U\cap\partial I^2)$, with $K$ being the unbounded component of $\R^2-W$. Then $\partial K$ is connected. Note that $\{P,Q\}\subset \partial K$ and $U\cap\partial I^2\subset\partial K$. The connected component of $\partial K-(U\cap\partial I^2)$ containing $P$ also contains $Q$ and is contained in $\partial U-(B'\cap\partial U)$.\end{proof}

\begin{proof}[Proof of Theorem \ref{main}] Suppose $I\times I$ is covered by two open sets $V$ and $V'$ equipped with locally constant functions $d$ and $d'$ satisfying (\ref{C1}), (\ref{C2}), and (\ref{symm}). We will show that this leads to a contradiction, which, along with the three symmetric motion planning rules described in Section \ref{sec2}, implies the theorem.

By compactness, only finitely many connected components of $V$ and $V'$ need be considered.
 At least one of these, say $V$, must contain $(0,0)$ and hence also the other three corner points. Schematically, there are three ways that the connected components of $V$ containing the corner points can occur, as illustrated in Figure \ref{three}, with the $d$-values in each indicated. We will call a portion such as occurs in the center of the second and third figures a ''bridge.'' A fourth possibility, a 90-degree rotation of the second figure, would have to have $d=0$ in all three parts by (\ref{symm}), and this would contradict (\ref{C1}) and (\ref{C2}). The small rectangular portions in the third figure are not a connected component of $V$ containing a corner point, but are necessary in order to satisfy (\ref{C1}) and (\ref{C2}).

 \smallskip

\begin{fig}\label{three}

{\bf Three possibilities for corner-point neighborhoods.}

\begin{center}

\begin{\tz}[scale=3.2]
\draw (0,0) --(1,0) --(1,1) -- (0,1) -- (0,0);
\draw (.25,0) arc [radius=.25, start angle=0, end angle=90];
\draw (1,.25) arc [radius=.25, start angle=90, end angle=180];
\draw (0,.75) arc [radius=.25, start angle=270, end angle=360];
\draw (.75,1) arc [radius=.25, start angle=180, end angle=270];
\draw (3.8,1) arc [radius=.2, start angle=180, end angle=270];
\draw (1.5,.75) arc [radius=.25, start angle=270, end angle=360];
\draw (1.5,0) --(2.5,0) --(2.5,1) -- (1.5,1) -- (1.5,0);
\draw (2.5,.25) arc [radius=.25, start angle=90, end angle=180];
\draw (3,0) --(4,0) --(4,1) -- (3,1) -- (3,0);
\draw (1.75,0) to [out=70, in=200] (2.5,.75);
\draw (1.5,.25) to [out=20, in=250] (2.25,1);
\draw (4,.2) arc [radius=.2, start angle=90, end angle=180];
\draw (3,.8) arc [radius=.2, start angle=270, end angle=360];
\draw (3.65,1) arc [radius=.35, start angle=180, end angle=270];
\draw (3.2,0) to [out=70, in=180] (4,.55);
\draw (3,.2) to [out=20, in=270] (3.55,1);
\node at (.13,.9) {$-1$};
\node at (.1,.1) {$0$};
\node at (.9,.9) {$0$};
\node at (.9,.1) {$1$};
\node at (2.4,.1) {$1$};
\node at (2.2,.7) {$0$};
\node at (1.63,.9) {$-1$};
\node at (3.09,.92) {$-1$};
\node at (3.9,.9) {$0$};
\node at (3.6,.08) {$1$};
\node at (3.09,.59) {$-1$};
\node at (3.91,.09) {$1$};
\node at (3.5,.5) {$0$};
\draw (3.65,0) --(3.65,.18) --(3.55,.18) --(3.55,0);
\draw (3,.55) --(3.18,.55) --(3.18,.65) --(3,.65);
\node at (.5,-.1) {$a$};
\node at (2,-.1) {$b$};
\node at (3.5,-.1) {$c$};
\end{\tz}

\end{center}

\end{fig}

The $d$-values are as indicated because a connected open set containing a neighborhood of a point $(t,t)$ must have $d=0$ by (\ref{symm}), and then the other sets have $d=\pm1$ by (\ref{C1}) and (\ref{C2}). The boundaries of these regions need not be smooth curves as suggested by the diagram, but by Corollary \ref{cor} they are connected sets containing the two points on $\partial I^2$. These connected open sets can have holes, either in the interior of $I^2$ or, more significantly, containing an open set in $\partial I^2$. We will deal with the consequences of such holes later in the proof.

The other open set, $V'$, must contain connected open sets covering the boundaries of the components of $V$ just considered. There are several possibilities. We have indicated in orange a variety of such possibilities, again very schematically, in Figure \ref{four}. The $V'$ components may (cases $a_1$, $b_1$, $c_1$) or may not ($a_2$, $b_2$, $c_2$) contain the corner points. A neighborhood of the boundaries of a bridge may (cases $b_1$, $c_1$) or may not ($b_2$, $c_2$) contain the entire bridge.
A $V'$ neighborhood of the sets in  Figure \ref{three}a may (case $a_2$) or may not ($a_1$) contain a bridge. We list the $d$-values for the $V$-neighborhoods in Figure \ref{four} to emphasize which portions of the diagram are included in $V$.

\begin{minipage}{6.5in}
\begin{fig}\label{four}

{\bf Six possibilities for first $V'$ neighborhoods.}

\begin{center}

\begin{\tz}[scale=3.2]
\path [fill=orange] (0,2.17) to [out=0,in=270] (.33,2.5) --(0,2.5) --(0,2.17);
\path [fill=orange] (.33,1.5) to [out=90,in=0] (0,1.83) --(0,1.5) --(.33,1.5);
\path [fill=orange] (.67,1.5) to [out=90,in=180] (1,1.83) --(1,1.5) --(.67,1.5);
\path [fill=orange] (1,2.17) to [out=180,in=270] (.67,2.5) --(1,2.5) --(1,2.17);
\draw (0,1.5) --(1,1.5) --(1,2.5) -- (0,2.5) -- (0,1.5);
\draw [thick] (.25,1.5) arc [radius=.25, start angle=0, end angle=90];
\draw [thick] (1,1.75) arc [radius=.25, start angle=90, end angle=180];
\draw [thick] (0,2.25) arc [radius=.25, start angle=270, end angle=360];
\draw [thick] (.75,2.5) arc [radius=.25, start angle=180, end angle=270];
\node at (.13,2.4) {$-1$};
\node at (.1,1.6) {$0$};
\node at (.9,2.4) {$0$};
\node at (.9,1.6) {$1$};
\node at (.5,1.4) {$a_1$};

\path [fill=orange] (1.5,2.17) to [out=0,in=270] (1.83,2.5) --(1.7,2.5) --(1.5,2.3) --(1.5,2.17);
\path [fill=orange] (2.17,1.5) to [out=90,in=180] (2.5,1.83) --(2.5,1.7) --(2.3,1.5) --(2.17,1.5);
\path [fill=orange] (1.83,1.5) to [out=70,in=200] (2.5,2.17) --(2.5,2.3) --(2.3,2.5) --(2.17,2.5) to [out=250,in=20] (1.5,1.83) --(1.5,1.7) --(1.7,1.5) --(1.83,1.5);
\draw (1.5,1.5) --(2.5,1.5) --(2.5,2.5) -- (1.5,2.5) -- (1.5,1.5);
\draw [thick] (1.75,1.5) arc [radius=.25, start angle=0, end angle=90];
\draw [thick] (2.5,1.75) arc [radius=.25, start angle=90, end angle=180];
\draw [thick] (1.5,2.25) arc [radius=.25, start angle=270, end angle=360];
\draw [thick] (2.25,2.5) arc [radius=.25, start angle=180, end angle=270];
\node at (1.63,2.4) {$-1$};
\node at (1.6,1.6) {$0$};
\node at (2.4,2.4) {$0$};
\node at (2.4,1.6) {$1$};
\node at (2,1.4) {$a_2$};

\path [fill=orange] (3,2.17) to [out=0,in=270] (3.33,2.5) --(3,2.5) --(3,2.17);
\path [fill=orange] (3.67,1.5) to [out=90,in=180] (4,1.83) --(4,1.5) --(3.67,1.5);
\path [fill=orange] (3.33,1.5) to [out=70,in=200] (4,2.17) --(4,2.5) --(3.67,2.5) to [out=250,in=20] (3,1.83) --(3,1.5) --(3.33,1.5);
\draw [thick] (3.25,1.5) to [out=70, in=200] (4,2.25);
\draw [thick] (3,1.75) to [out=20, in=250] (3.75,2.5);
\draw (3,1.5) --(4,1.5) --(4,2.5) -- (3,2.5) -- (3,1.5);
\draw [thick] (4,1.75) arc [radius=.25, start angle=90, end angle=180];
\draw [thick] (3,2.25) arc [radius=.25, start angle=270, end angle=360];
\node at (3.9,1.6) {$1$};
\node at (3.7,2.2) {$0$};
\node at (3.13,2.4) {$-1$};
\node at (3.5,1.4) {$b_1$};

\path [fill=orange] (.22,0) --(.78,0) --(1,.22) --(1,.78) to [out=200,in=70] (.22,0);
\path [fill=orange] (0,.22) to [out=20,in=250] (.78,1) --(.22,1) --(0,.78) --(0,.22);
\draw (0,0) --(1,0) --(1,1) --(0,1) --(0,0);
\draw [thick] (1,.26) arc [radius=.26, start angle=90, end angle=180];
\draw [thick] (0,.74) arc [radius=.26, start angle=270, end angle=360];
\draw [thick] (.26,0) to [out=70, in=200] (1,.74);
\draw [thick] (0,.26) to [out=20, in=250] (.74,1);
\node at (.9,.1) {$1$};
\node at (.7,.7) {$0$};
\node at (.13,.9) {$-1$};
\node at (.5,-.1) {$b_2$};

\path [fill=orange] (1.5,.75) to [out=0,in=270] (1.75,1) --(1.5,1) --(1.5,.75);
\path [fill=orange] (2.25,0) to [out=90,in=180] (2.5,.25) --(2.5,0) --(2.25,0);
\path [fill=orange] (2.5,.75) to [out=180,in=270] (2.25,1) --(2.5,1) --(2.5,.75);
\path [fill=orange] (1.75,0) to [out=70, in=180] (2.5,.5) --(2.5,.7) to [out=180,in=270] (2.2,1) --(2,1) to [out=270,in=20] (1.5,.25) --(1.5,0) --(1.75,0);
\path [fill=orange] (2,0) --(2,.2) --(2.2,.2) --(2.2,0) --(2,0);
\path [fill=orange] (1.5,.5) --(1.7,.5) --(1.7,.7) --(1.5,.7) --(1.5,.5);
\draw (1.5,0) --(2.5,0) --(2.5,1) -- (1.5,1) -- (1.5,0);
\draw [thick] (2.5,.2) arc [radius=.2, start angle=90, end angle=180];
\draw [thick] (1.5,.8) arc [radius=.2, start angle=270, end angle=360];
\draw [thick] (2.15,1) arc [radius=.35, start angle=180, end angle=270];
\draw [thick] (1.7,0) to [out=70, in=180] (2.5,.55);
\draw [thick] (1.5,.2) to [out=20, in=270] (2.05,1);
\draw [thick] (2.3,1) arc [radius=.2, start angle=180, end angle=270];
\draw [thick] (2.05,0) --(2.05,.18) --(2.15,.18) --(2.15,0);
\draw [thick] (1.5,.55) --(1.68,.55) --(1.68,.65) --(1.5,.65);
\node at (1.59,.92) {$-1$};
\node at (2.4,.9) {$0$};
\node at (2.1,.08) {$1$};
\node at (1.59,.59) {$-1$};
\node at (2.41,.09) {$1$};
\node at (2,.5) {$0$};
\node at (2,-.1) {$c_1$};

\path [fill=orange] (4,.62) to [out=180,in=270] (3.62,1) --(3.68,1) to [out=270,in=180] (4,.68) --(4,.62);
\path [fill=orange] (3,.17) to [out=20, in=270] (3.58,1) --(3.52,1) to [out=270,in=20] (3,.23) --(3,.17);
\path [fill=orange] (4,.77) to [out=180,in=270] (3.77,1) --(3.83,1) to [out=270,in=180] (4,.83) --(4,.77);
\path [fill=orange] (3,.52) --(3.18,.52) --(3.18,.68) --(3,.68) --(3,.62) --(3.12,.62) --(3.12,.58) --(3,.58) --(3,.52);
\path [fill=orange] (3,.77) to [out=0,in=270] (3.23,1) --(3.17,1) to [out=270,in=0] (3,.83) --(3,.77);
\path [fill=orange] (3.77,0) to [out=90,in=180] (4,.23) --(4,.17) to [out=180,in=90] (3.83,0) --(3.77,0);
\path [fill=orange] (3.52,0) --(3.52,.18) --(3.68,.18) --(3.68,0) --(3.62,0) --(3.62,.12) --(3.58,.12) --(3.58,0) --(3.52,0);
\path [fill=orange] (3.23,0) to [out=70,in=180] (4,.52) --(4,.58) to [out=180,in=70] (3.17,0) --(3.23,0);
\draw (3,0) --(4,0) --(4,1) -- (3,1) -- (3,0);
\draw [thick] (4,.2) arc [radius=.2, start angle=90, end angle=180];
\draw [thick] (3,.8) arc [radius=.2, start angle=270, end angle=360];
\draw [thick] (3.65,1) arc [radius=.35, start angle=180, end angle=270];
\draw [thick] (3.2,0) to [out=70, in=180] (4,.55);
\draw [thick] (3,.2) to [out=20, in=270] (3.55,1);
\draw [thick] (3.8,1) arc [radius=.2, start angle=180, end angle=270];
\draw [thick] (3.55,0) --(3.55,.18) --(3.65,.18) --(3.65,0);
\draw [thick] (3,.55) --(3.18,.55) --(3.18,.65) --(3,.65);
\node at (3.09,.92) {$-1$};
\node at (3.9,.9) {$0$};
\node at (3.6,.08) {$1$};
\node at (3.09,.59) {$-1$};
\node at (3.91,.09) {$1$};
\node at (3.5,.5) {$0$};
\node at (3.5,-.1) {$c_2$};
\end{\tz}

\end{center}

\end{fig}
\end{minipage}

Note that only Figure \ref{four}$b_2$ has the property that the displayed sets $V$ and $V'$ cover $I^2$.  However, in this case, $V'$ does not admit a $d'$ function. If $C_L$ and $C_R$ are its two components, then $d'(C_R)=-d'(C_L)$ by (\ref{symm}) but $d'(C_R)=d'(C_L)+1$ by (\ref{C2}), and so these cannot have integer values. In all the other cases of Figure \ref{four}, $V'$ admits a $d'$-function which equals the $d$-value of the $V$-set which it intersects.

Next, $V$ must contain open connected sets containing all of the $V'$-boundaries just obtained which were not contained in the initial $V$-sets. Note that the boundary of a $V$-component can never intersect the boundary of a $V'$-component because (since $V$ and $V'$ are open) the boundary points are not in the open sets, and so an intersection point of the two boundaries would not be in $V\cup V'$, which is supposed to cover the square.

In order to have $V\cup V'$ cover the square, we must continue alternately adding new components of $V$ and $V'$, each time covering new boundary parts of the other set just added. At some stage, the situation illustrated in \ref{three}$a$ and \ref{four}$a_1$ must yield to a bridge, in order that the diagonal is covered by $V\cup V'$. At some stage, the bands coming down to the right from the bridge, and up from the lower right corner must combine. For example, that could happen with the next $V$-component in Figure \ref{four} parts $a_2$, $b_1$, $c_1$, or $c_2$. We claim that this will necessarily cause a contradiction on the $d$-function similar to that observed in Figure \ref{four}$b_2$.

We consider now the $V$-boundaries which extend from the lower edge to the right edge, as well as their symmetric counterparts. These, as well as the analogous $V'$-boundaries, are obtained iteratively using Corollary \ref{cor} with $P$ equal to the sup of the set of $x$ such that $(x,0)$ is in the closure of the $V$ or $V'$ component being considered, and $Q$ the inf of $y$'s such that $(1,y)$ is in this closure. The $V$-boundaries are illustrated schematically in Figure \ref{five}.

\begin{fig}\label{five}

{\bf Boundaries of $V$-bands.}

\begin{center}

\begin{\tz}[scale=2]
\draw (0,0) --(2,0) --(2,2) --(0,2) --(0,0);
\draw (.9,0) to [out=90,in=190] (2,.8);
\draw (1,0) to [out=90,in=190] (2,.7);
\draw (1.3,0) to [out=90,in=190] (2,.5);
\draw(1.7,0) to [out=90,in=190] (2,.2);
\node at (.88,-.1) {$x_1$}; \node at (-.15,.87) {$x_1$};
\node at (1.06,-.1) {$x_2$}; \node at (-.15,1) {$x_2$};
\node at (1.7,-.1) {$x_n$}; \node at (-.15,1.7) {$x_n$};
\draw (0,.9) to [out=10,in=270] (.8,2);
\draw (0,1) to [out=10,in=270] (.7,2);
\draw (0,1.3) to [out=10,in=270] (.5,2);
\draw (0,1.7) to [out=10,in=270] (.2,2);
\node at (2.14,.82) {$y_1$}; \node at (.88,2.1) {$y_1$};
\node at (2.14,.68) {$y_2$}; \node at (.68,2.1) {$y_2$};
\node at (2.14,.2) {$y_n$}; \node at (.2,2.1) {$y_n$};
\draw (.9,2) to [out=270,in=270] (1,2);
\draw (2,.9) to [out=180,in=180] (2,1);
\draw (0,.7) --(.1,.7) --(.1,.8) --(0,.8);
\draw (.7,0) --(.7,.1) --(.8,.1) --(.8,0);
\end{\tz}
\end{center}
\end{fig}

The open bands between these boundary sets could have interior holes, which are not an issue at all, as a hole in a $V$-band can be covered by a $V'$ open set with no problem regarding the $d$-function, and vice versa, and they can have holes or modifications at the boundary, which we will consider later. Temporarily ignoring this possibility, each of the open bands between consecutive boundary sets must be either a component of $V$, or else covered by a component of $V'$. The requirement that $(0,t)\in V$ iff $(1,t)$ forces additional components as illustrated in the tiny portions of Figure \ref{five}.

The region between the two $x_1y_1$ boundary sets must have $d=0$ by (\ref{symm}). If $y_1< x_1$, as is the case in Figure \ref{five}, then for $t$ between $y_1$ and $x_1$, we obtain a contradiction to (\ref{C2}).
If $y_1=x_1$, then the $V'$ sets containing the two $x_1y_1$ boundary sets imply $d'(0,x_1)=-\frac12$ by (\ref{C2}) and (\ref{symm}), contradicting integrality of $d'$.
At the other extreme, if $y_n>x_n$, then if $x_n<t<y_n$, then $(0,t)$ and $(1,t)$ lie in symmetric regions, so $d(0,t)=-d(1,t)$, which is inconsistent with (\ref{C2}) and integrality of $d$. See Figure \ref{seven} for an illustration of a variation on this.

Otherwise, let $k$ be minimal such that $y_k\le x_k$. The preceding paragraph shows that such a $k$ must exist with $1<k<n$. If $y_k<x_k$, then there exists $t$ satisfying $x_{k-1}<t<x_k$ and $y_k<t<y_{k-1}$. Then $(0,t)$ and $(1,t)$ lie in symmetric bands, and so (\ref{symm}) and (\ref{C2}) imply the usual contradiction to integrality of $d$. If $y_k=x_k$, the contradiction is obtained on $d'(0,x_k)$ and $d'(1,x_k)$ using the $V'$ sets which contain the $x_ky_k$ boundary set and its symmetric counterpart.

Our contradictions have all been due to points of $\partial I^2$ which lie in symmetric bands of components of $V$ or $V'$. It is conceivable that these bands might have a hole where they meet $\partial I^2$.  Suppose $y_k<x_{k-1}<y_{k-1}<x_k$, so that we expect to obtain a contradiction in this band. It could happen that the points $(0,t)$ for $x_{k-1}<t<y_{k-1}$ are cut off from the main band as indicated schematically in Figure \ref{six}, in which we write $k'$ instead of $k-1$ for typographical reasons.

\begin{fig}\label{six}

{\bf Hole at contradiction point.}

\begin{center}

\begin{\tz}[scale=2]
\draw (0,0) --(2,0) --(2,2) --(0,2) --(0,0);
\path [fill=orange] (1.3,0) to [out=140,in=130] (2,.7) --(2,.86) --(1.9,1) --(2,1.14) --(2,1.3) to [out=220,in=90] (.7,0) --(.86,0) to [out=45,in=135] (1.14,0) --(1.3,0);
\draw [thick] (2,.8) --(1.8,1) --(2,1.2);
\draw [thick] (.8,0) to [out=75,in=180] (1,.2) to [out=0,in=105] (1.2,0);
\draw [thick] (2,1.2) to [out=220,in=90] (.8,0);
\draw [thick] (1.4,0) to [out=90,in=180] (2,.6);
\node at (1.94,1) {$S$};
\node at (.8,-.1) {$x_{k'}$};
\node at (1.2,-.1) {$y_{k'}$};
\node at (1.45,-.1) {$x_k$};
\node at (2.12,.6) {$y_k$};
\node at (2.15,.8) {$x_{k'}$};
\node at (2.15,1.2) {$y_{k'}$};
\path [fill=orange] (0,1.3) to [out=310,in=320] (.7,2) --(.86,2) --(1,1.9) --(1.14,2) --(1.3,2) to [out=230,in=0] (0,.7) --(0,.86) to [out=45,in=315] (0,1.14) --(0,1.3);
\draw [thick] (.8,2) --(1,1.8) --(1.2,2);
\draw [thick] (0,.8) to [out=15,in=270] (.2,1) to [out=90,in=345] (0,1.2);
\draw [thick] (1.2,2) to [out=230,in=0] (0,.8);
\draw [thick] (0,1.4) to [out=0,in=270] (.6,2);
\node at (.1,1) {$R$};
\node at (-.13,.8) {$x_{k'}$};
\node at (-.13,1.2) {$y_{k'}$};
\node at (-.13,1.4) {$x_k$};
\node at (.56,2.1) {$y_k$};
\node at (.81,2.1) {$x_{k'}$};
\node at (1.23,2.1) {$y_{k'}$};
\end{\tz}
\end{center}
\end{fig}

The indicated region $R$ and its symmetric counterpart must be part of $V$ or part of $V'$. Since $(t,0)\in V$ (resp.~$V'$) iff $(t,1)\in V$ (resp.~$V'$), there will be an opposing part of $V$ or $V'$, as suggested by the set $S$ in Figure \ref{six}, which will also have a symmetric counterpart. The boundary of $R$ must intersect the $x_{k-1}y_{k-1}$ boundary, for otherwise there would be points of $\partial I^2$ in the band  giving the previous contradiction. There must be a $V'$ component containing the union of $\partial R$ and the $x_{k-1}y_{k-1}$ boundary, as indicated by the orange set in Figure \ref{six}. The values $d'(0,x_{k-1})$ and $d'(1,x_{k-1})$ give the usual contradiction ((\ref{symm}), (\ref{C2}), and integrality).

A similar situation could occur regarding the contradiction that was obtained earlier in the case that $y_n>x_n$. As illustrated in Figure \ref{seven}, the $t$-values between $x_n$ and $y_n$ on $\partial I^2$ could be cut off from the main symmetric regions. We obtain the same sort of contradiction as was obtained in Figure \ref{six}, using here the $V'$-component containing the union of the $x_ny_n$-boundary and $\partial R$, and its symmetric counterpart.
\end{proof}

\begin{fig}\label{seven}

{\bf Hole at contradiction point near end.}

\begin{center}

\begin{\tz}[scale=2]
\path [fill=orange] (2,1.7) to [out=225,in=90] (1.2,0) --(1.7,0) to [out=90,in=225] (2,1.2) --(2,1.7);
\path [fill=orange] (1.7,2) to [out=225,in=0] (0,1.2) --(0,1.7) to [out=0,in=225] (1.2,2) --(1.7,2);
\draw (0,0) --(2,0) --(2,2) --(0,2) --(0,0);
\draw [thick] (.8,0) to [out=90,in=225] (2,1.8);
\draw [thick] (0,.8) to [out=0,in=225] (1.8,2);
\draw [thick] (1.3,0) to [out=90,in=225] (2,1.6);
\draw [thick] (0,1.3) to [out=0,in=225] (1.6,2);
\draw [thick] (1.3,0) to [out=70,in=180] (1.5,.19) to [out=0,in=110] (1.6,0);
\draw [thick] (0,1.3) to [out=20,in=270] (.19,1.5) to [out=90,in=340] (0,1.6);
\draw [thick] (2,1.6) --(1.82,1.3) --(2,1.3);
\draw [thick] (1.6,2) --(1.3,1.82) --(1.3,2);
\node at (.8,-.1) {$x_1$};
\node at (1.3,-.1) {$x_n$};
\node at (1.6,-.1) {$y_n$};
\node at (-.13,.8) {$x_1$};
\node at (-.13,1.28) {$x_n$};
\node at (-.13,1.6) {$y_n$};
\node at (.08,1.45) {$R$};
\node at (2.1,1.8) {$y_1$};
\node at (2.1,1.6) {$y_n$};
\node at (1.55,2.1) {$y_n$};
\node at (1.82,2.1) {$y_1$};
\end{\tz}
\end{center}
\end{fig}
 \def\line{\rule{.6in}{.6pt}}

\end{document}